\tikzset{mytext/.style={font=\small, text=black}}
\newtheorem{Theorem}{Theorem}
\newtheorem{proposition}{Proposition}[section]
\newtheorem{lemma}[proposition]{Lemma}
\newtheorem{theorem}[proposition]{Theorem}
\theoremstyle{definition}
\definecolor{blue}{HTML}{008ED7}
\definecolor{mygray}{gray}{0.75}
\definecolor{lightBlue}{rgb}{0.88,1.0,1.0}
\numberwithin{equation}{section}
\title[Hausdorff dimension of regular branch groups]{An algorithm to compute the Hausdorff dimension of regular branch groups}
\author{Jorge Fariña-Asategui}
\address{Jorge Fariña-Asategui: Centre for Mathematical Sciences, Lund University, 223 62 Lund, Sweden -- Department of Mathematics, University of the Basque Country UPV/EHU, 48080 Bilbao, Spain}
\email{jorge.farina\_asategui@math.lu.se}
\keywords{Hausdorff dimension, regular branch groups, GGS-groups.}
\subjclass[2020]{Primary: 20E08, 28A78; Secondary: 20E18}
\thanks{The author is supported by the Spanish Government, grant PID2020-117281GB-I00, partly with FEDER funds. The author also acknowledges support from the Walter Gyllenberg Foundation from the Royal Physiographic Society of Lund}
\begin{document}

\begin{abstract}
An explicit algorithm is given for the computation of the Hausdorff dimension of the closure of a regular branch group in terms of an arbitrary branch structure. We implement this algorithm in GAP and apply it to a family of GGS-groups acting on the 4-adic tree.
\end{abstract}

\maketitle

\section{Introduction}

Groups acting on rooted trees, and in particular branch groups, have been \mbox{thoroughly} studied in the last decades and there are connections even outside of group theory, such as to analysis, dynamics, algebraic geometry, computer science and cryptography; see \cite{Handbook, SelfSimilar} for an overview and \cite{JorgeCyclicity, SukranJone, Crypto, PenlandSunicSubshifts} for some of the aforementioned connections. A well-studied aspect of these groups acting on rooted trees is their Hausdorff dimension; see \cite{JorgeSpectra} and the references therein.

Bartholdi and Noce proved recently in \cite[Corollary E]{MarialauraBartholdi} that the Hausdorff dimension of a regular branch profinite group is computable, i.e. there exists an algorithm to compute its Hausdorff dimension from a given branch structure; see \cref{section: preliminaries} for unexplained terminology here and elsewhere in the introduction. For a branch structure over a level-stabilizer on the $p$-adic tree an explicit formula was given by \v{S}uni\'{c} in \cite[Theorem 4(b)]{SunikHausdorff}. Here we give an explicit formula for an arbitrary branch structure. We note that this algorithm is valid more generally for all generalized regular branch profinite groups as defined in \cite{PenlandSunicSubshifts}. 

Let $\Omega:\mathbb{N}\to \mathbb{N}\cup\{0\}$ be the function counting prime factors with multiplicity. We shall write $\Omega(G:K)$ for $\Omega(|G:K|)$. We also write $G_n:=G/\mathrm{St}_G(n)$ for $n\ge 1$. Finally, recall the sequence $\{s_n(G)\}_{n\ge 1}$ defined by the author in \cite{JorgeSpectra} as
$$s_n(G):=m \log |\mathrm{St}_G(n-1):\mathrm{St}_G(n)|-\log|\mathrm{St}_G(n):\mathrm{St}_G(n+1)|.$$

\begin{Theorem}
\label{Theorem: algorithm Hausdorff dimension}
Let $G\le W_H\le \mathrm{Aut}~T_m$ be a regular branch group branching over a finite-index subgroup $K\le G$. Then the closure~$\overline{G}$ of $G$ in $W_H$ is regular branch over its $\Omega(G:K)$th level stabilizer. Therefore the Hausdorff dimension of $\overline{G}$ in $W_H$ is
$$\mathrm{hdim}_{W_H}(\overline{G})=\frac{1}{\log|H|}\left(\log|G_1|-\sum_{n=1}^{\Omega(G:K)}\frac{s_n(G)}{m^n}\right).$$
Furthermore, the logarithmic indices of the quotients $G_n$ are given by
$$\log|G_n|=\alpha m^n+\beta$$
for all $n\ge \Omega(G:K)$, where 
$$\alpha:=\frac{\mathrm{hdim}_{W_H}(\overline{G})\cdot \log|H|}{m-1}\quad\text{and}\quad \beta:=\frac{\sum_{n=1}^{\Omega(G:K)}s_n-\log|G_1|}{m-1}.$$
\end{Theorem}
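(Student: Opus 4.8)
The plan is to reduce the entire statement to a single combinatorial fact about how quickly the level stabilizers of $\overline{G}$ descend into $\overline{K}$, and then to establish that fact by a no-plateau argument in the finite quotient $\overline{G}/\overline{K}$.

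First I would pass to the closure and set up the recursion. Writing $a_n:=\log|\mathrm{St}_{\overline{G}}(n):\mathrm{St}_{\overline{G}}(n+1)|$ with $\mathrm{St}_{\overline{G}}(0):=\overline{G}$, one has $a_0=\log|G_1|$ and, straight from the definition, $s_n(G)=m\,a_{n-1}-a_n$; since the level quotients of $G$ and $\overline{G}$ coincide, $G_n=\overline{G}/\mathrm{St}_{\overline{G}}(n)$, $s_n(G)=s_n(\overline{G})$, and $\log|G_n|=\sum_{k=0}^{n-1}a_k$. Everything then reduces to the single claim that $s_n(\overline{G})=0$ for all $n>\Omega(G:K)$. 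Granting this, $\log|G_{n+1}|=(m+1)\log|G_n|-m\log|G_{n-1}|$ for $n>\Omega(G:K)$, a linear recurrence with characteristic roots $1$ and $m$, whose solution is $\log|G_n|=\alpha m^n+\beta$ for $n\ge\Omega(G:K)$; substituting the closed form
$$a_n=m^n\left(\log|G_1|-\sum_{k=1}^{n}m^{-k}s_k\right)$$
identifies $\alpha$ and $\beta$ with the stated expressions, and since $\log|W_H:\mathrm{St}_{W_H}(n)|=\frac{m^n-1}{m-1}\log|H|$, the ratio $\log|G_n|/\log|W_H:\mathrm{St}_{W_H}(n)|$ converges (a genuine limit, not merely a $\liminf$) to $\alpha(m-1)/\log|H|$, giving the Hausdorff dimension formula. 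This is routine algebra once the vanishing of $s_n$ is in hand.

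Next I would reformulate the vanishing geometrically. By injectivity of $\psi$ on $\mathrm{St}_{\overline{G}}(1)$, the equality $\psi(\mathrm{St}_{\overline{G}}(n+1))=\mathrm{St}_{\overline{G}}(n)\times\cdots\times\mathrm{St}_{\overline{G}}(n)$ forces $a_n=m\,a_{n-1}$, i.e. $s_n=0$; and this full-branching equality holds for every $n\ge\ell$ as soon as $\mathrm{St}_{\overline{G}}(\ell)\times\cdots\times\mathrm{St}_{\overline{G}}(\ell)\subseteq\psi(\mathrm{St}_{\overline{G}}(1))$, that is, as soon as $\overline{G}$ is regular branch over $\mathrm{St}_{\overline{G}}(\ell)$. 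A lifting observation then reduces this to the plain inclusion $\mathrm{St}_{\overline{G}}(\ell)\subseteq\overline{K}$: if $\mathrm{St}_{\overline{G}}(\ell)\subseteq\overline{K}$ and $(g_1,\dots,g_m)\in\mathrm{St}_{\overline{G}}(\ell)^{\times m}$, then $(g_1,\dots,g_m)\in\overline{K}^{\times m}\subseteq\psi(\overline{K}\cap\mathrm{St}_{\overline{G}}(1))$, and its $\psi$-preimage automatically lies in $\mathrm{St}_{\overline{G}}(\ell+1)\subseteq\mathrm{St}_{\overline{G}}(\ell)$. Hence it suffices to prove $\mathrm{St}_{\overline{G}}(\Omega(G:K))\subseteq\overline{K}$.

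The crux, and the step I expect to be the main obstacle, is this sharp bound on the depth: the naive estimate from $\overline{K}^{\times m}\subseteq\psi(\mathrm{St}_{\overline{G}}(1))$ only yields the weaker $m\,\Omega(G:K)$. To obtain the correct constant I would analyze the descending chain $\overline{\mathrm{St}(n)}:=\mathrm{St}_{\overline{G}}(n)\overline{K}/\overline{K}$ of subgroups of the finite group $\overline{G}/\overline{K}$, which terminates at the trivial subgroup because $\mathrm{St}_{\overline{G}}(n)\to 1$. The decisive point is a no-plateau property: if $\mathrm{St}_{\overline{G}}(n)\overline{K}=\mathrm{St}_{\overline{G}}(n+1)\overline{K}$ for some $n$, then the same equality holds at $n+1$. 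I would prove this by the lifting trick again: for $g\in\mathrm{St}_{\overline{G}}(n+1)$ with $\psi(g)=(g_1,\dots,g_m)$, write each $g_i\in\mathrm{St}_{\overline{G}}(n)$ as $g_i=t_ik_i$ with $t_i\in\mathrm{St}_{\overline{G}}(n+1)$ and $k_i\in\overline{K}$ (using the plateau hypothesis), pick $\kappa\in\overline{K}\cap\mathrm{St}_{\overline{G}}(1)$ with $\psi(\kappa)=(k_1,\dots,k_m)$, and observe that $g\kappa^{-1}$ has all sections in $\mathrm{St}_{\overline{G}}(n+1)$, hence lies in $\mathrm{St}_{\overline{G}}(n+2)$, so $g\in\mathrm{St}_{\overline{G}}(n+2)\overline{K}$. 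A plateau therefore propagates forever; as the chain does reach the trivial subgroup, any plateau value is already trivial, so the chain decreases strictly until it becomes trivial and thus reaches $1$ in at most $\Omega(|\overline{G}:\overline{K}|)\le\Omega(G:K)$ steps. This gives $\mathrm{St}_{\overline{G}}(\Omega(G:K))\subseteq\overline{K}$ and closes the argument.
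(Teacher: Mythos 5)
Your proposal is correct, and its overall skeleton matches the paper's: both arguments reduce the theorem to showing that the descending chain $\mathrm{St}(n)\cdot K$ (in your version, $\mathrm{St}_{\overline{G}}(n)\overline{K}$) stabilizes after at most $\Omega(G:K)$ steps, and both obtain this by combining a plateau-propagation lemma with the Lagrange prime-counting bound; once $\mathrm{St}_{\overline{G}}(\Omega(G:K))\le \overline{K}$ is in hand, the vanishing of $s_n$ and the formulas follow in both treatments. The genuine difference is how plateau propagation is proved. The paper first replaces $K$ by its normal core $\mathrm{Core}_G(K)$ (citing \cite{JorgeJoneOihana} for the fact that the core of a branching subgroup is still branching) and then proves \cref{proposition: if trivial always trivial} by an index-theoretic computation — the index inequalities, the level-stabilizer identity, and Dedekind's modular law — all of which require $K$ to be normal. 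You instead prove propagation by an element-wise lifting: split the sections $g_i=t_ik_i$ of $g\in\mathrm{St}_{\overline{G}}(n+1)$, assemble $\kappa\in\overline{K}$ with $\psi(\kappa)=(k_1,\dotsc,k_m)$ using that $\overline{K}$ is branching, and observe that $g\kappa^{-1}\in\mathrm{St}_{\overline{G}}(n+2)$. This needs no normality at all, so you bypass the core replacement and its attendant citation entirely — a real simplification, since passing to the core a priori changes the quantity $\Omega(G:K)$ one is counting against. You also stay with the closures $\overline{G},\overline{K}$ throughout, and you re-derive the dimension formula by solving the recurrence $\log|G_{n+1}|=(m+1)\log|G_n|-m\log|G_{n-1}|$ rather than invoking \cref{theorem: sn formula} and the computation from \cite{BartholdiHausdorff}; this makes your argument essentially self-contained, at the cost of redoing some routine algebra the paper delegates to references.

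Three facts you use silently should be made explicit. First, $\overline{K}$ is branching and $\overline{G}$ is self-similar; both follow from the corresponding properties of $K$ and $G$ by continuity of $\psi$ and of the section maps (e.g. $\psi(\overline{K})\supseteq\overline{K^{\times m}}=\overline{K}^{\times m}$). Second, $\Omega(\overline{G}:\overline{K})\le\Omega(G:K)$ requires divisibility, not merely the inequality $|\overline{G}:\overline{K}|\le|G:K|$, because $\Omega$ is not monotone; it holds since density of $G$ and openness of $\overline{K}$ give $\overline{G}=G\overline{K}$, hence $|\overline{G}:\overline{K}|=|G:G\cap\overline{K}|$, which divides $|G:K|$ as $K\le G\cap\overline{K}\le G$. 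Third, if $\overline{K}$ is not normal then $\overline{G}/\overline{K}$ is only a coset space, not a finite group; but your counting uses nothing beyond the chain of subgroups $\mathrm{St}_{\overline{G}}(n)\overline{K}$ of $\overline{G}$ and multiplicativity of indices, so this is purely a matter of wording.
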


This opens the door to computing the Hausdorff dimension of the closure of a regular branch group using only computational methods such as GAP. We implement the algorithm from \cref{Theorem: algorithm Hausdorff dimension} in GAP and use it to compute the Hausdorff dimension of the closure of all not invertible-symmetric GGS-groups acting on the 4-adic tree \cite{GGSElena}; see \cref{Theorem: GGS Hausdorff example}. We also provide a method to discard branch structures on a self-similar group; see \cref{proposition: Second Grigorchuk example} for an example.

\subsection*{\textit{\textmd{Organization}}} 
We present some background material in \cref{section: preliminaries}. In \cref{section: further applications} we prove \cref{Theorem: algorithm Hausdorff dimension} and we conclude by showing an explicit application of \cref{Theorem: algorithm Hausdorff dimension} to GGS-groups in \cref{section: an example}.

\subsection*{\textit{\textmd{Notation}}} All the logarithms will be taken in base $m$, where $m$ is the degree of the regular rooted tree $T_m$.

\section{Preliminaries}
\label{section: preliminaries}

\subsection{Groups acting on rooted trees}

For a natural number $m\ge 2$, we define the \textit{$m$-adic tree} $T_m$ as the rooted tree where each vertex has exactly $m$ immediate descendants. The vertices in~$T_m$ at distance exactly $n$ from the root form the \textit{$n$th level of the tree}. Note that the $m$-adic tree may be identified with the free monoid on $m$ generators. The group of graph automorphisms of the $m$-adic tree will be denoted $\mathrm{Aut}~T_m$. 

For each $n\ge 1$, the pointwise stabilizer of the $n$th level of $T_m$ is called the \textit{$n$th level stabilizer} and it is denoted $\mathrm{St}(n)$. Given an element $f\in \mathrm{Aut}~T_m$ and a vertex $v\in T_m$, we define the \textit{section of $f$ at $v$} as the unique automorphism $f|_v\in \mathrm{Aut}~T_m$ such that
$$(vu)^f=v^fu^{f|_v}$$
for every $u\in T_m$. Writing $1,\dotsc,m$ for the vertices at the first level of $T_m$ we have the isomorphism $\psi:\mathrm{Aut}~T\to (\mathrm{Aut}~T_m\times\overset{m}{\dotsb}\times \mathrm{Aut}~T_m)\rtimes \mathrm{Sym}(m)$ given by
$$f\mapsto (f|_1,\dotsc, f|_m)\sigma_f,$$
where $\sigma_f\in \mathrm{Sym}(m)$ is the permutation action of $f$ on the first level of $T_m$.

Let $G\le \mathrm{Aut}~T_m$ be a subgroup. For each $n\ge 1$, the pointwise stabilizer of the $n$th level of $T_m$ is called the \textit{$n$th level stabilizer of $G$} and denoted $\mathrm{St}_G(n)$. This is a normal subgroup of~$G$ and the corresponding finite quotient $G/\mathrm{St}_G(n)$ is denoted~$G_n$. Note that
\begin{align}
    \label{align: level stabilizers}
    \psi(\mathrm{St}_G(n))=\psi(\mathrm{St}_G(1))\cap \big(\mathrm{St}_G(n-1)\times\overset{m}{\dotsb} \times \mathrm{St}_G(n-1)\big)
\end{align}
for every $n\ge 1$.

We say that the group $G$ is \textit{self-similar} if for every $g\in G$ and every $v\in T_m$ we have $g|_v\in G$, and \textit{weakly self-similar} if this holds for every $g\in \mathrm{St}_G(1)$ and every $v\in T_m$ instead. A subgroup $G\le \mathrm{Aut}~T_m$ is said to be \textit{level-transitive} if it acts transitively on every level of $T_m$. A subgroup $K\le \mathrm{Aut}~T_m$ is called \textit{branching} if 
$$\psi(K)\ge K\times\overset{m}{\dotsb}\times K.$$
A self-similar, level-transitive group $G\le \mathrm{Aut}~T_m$ is called \textit{regular branch} if it contains a finite-index branching subgroup $K\le G$.

\subsection{Hausdorff dimension}

The full automorphism group $\mathrm{Aut}~T_m$ is a profinite group with respect to the \textit{congruence topology}, i.e. the topology where $\{\mathrm{St}(n)\}_{n\ge 1}$ yields a base of open neighbourhoods for the identity. Furthermore $\mathrm{Aut}~T_m$ is a metric space with the distance induced by the filtration $\{\mathrm{St}(n)\}_{n\ge 1}$. As so, one may define a Hausdorff dimension for its closed subsets. In general, if $G\le\mathrm{Aut}~T_m$ is a closed subgroup, the Hausdorff dimension of $G$ in $\mathrm{Aut}~T_m$ is given by
$$\mathrm{hdim}_{\mathrm{Aut}~T_m}(G)=\liminf_{n\to\infty}\frac{\log|G:\mathrm{St}_G(n)|}{\log|\mathrm{Aut}~T_m:\mathrm{St}(n)|}.$$
We say that $G\le \mathrm{Aut}~T_m$ has \textit{strong Hausdorff dimension} if the lower limit above is a proper limit.

For a subgroup $H\le \mathrm{Sym}(m)$, the \textit{iterated wreath product} $W_H\le \mathrm{Aut}~T_m$ is the closed subgroup consisting of automorphisms of $T_m$ whose local action on the immediate descendants of each vertex in $T_m$ is given by a permutation in $H$. Note that $W_{\mathrm{Sym}(m)}=\mathrm{Aut}~T_m$. In general, if $G\le W_H\le \mathrm{Aut}~T_m$ is a closed subgroup we shall compute the Hausdorff dimension of $G$ in $W_H$, which is given by 
$$\mathrm{hdim}_{W_H}(G)=\liminf_{n\to\infty}\frac{\log|G:\mathrm{St}_G(n)|}{\log|W_H:\mathrm{St}_{W_H}(n)|}.$$

For any group $G\le \mathrm{Aut}~T_m$ we define the sequence $\{s_n(G)\}_{n\ge 1}$ via
$$s_n(G):=m \log |\mathrm{St}_G(n-1):\mathrm{St}_G(n)|-\log|\mathrm{St}_G(n):\mathrm{St}_G(n+1)|.$$
An easy computation shows that $s_n(G)=0$ for all $n>k$ if $G$ is regular branch over its $k$th level stabilizer.

This sequence $\{s_n(G)\}_{n\ge 1}$ yields an easy formula for computing the Hausdorff dimension of self-similar groups \cite{JorgeSpectra}. Note that we write $S_G(x)$ for the ordinary generating function $\sum_{n\ge 1}s_n(G)x$.

\begin{theorem}[{see {\cite[Theorem B and Proposition 1.1]{JorgeSpectra}}}]
    \label{theorem: sn formula}
    Let $G\le W_H$ be self-similar, where $H\le \mathrm{Sym}(m)$. Then, the closure of $G$ has strong Hausdorff dimension in $W_H$. What is more, we have
$$\mathrm{hdim}_{W_H}(\overline{G})=\frac{1}{\log |H|}\big(\log|G_1|-S_G(1/m)\big).$$
\end{theorem}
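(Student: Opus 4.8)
The plan is to prove the three assertions in order, the decisive one being that the branching level of $\overline{G}$ is exactly $\ell:=\Omega(G:K)$; granting this, the dimension formula is immediate from \cref{theorem: sn formula} and the expression for $\log|G_n|$ is the solution of a first-order linear recursion. First I would record that nothing changes on passing to the closure: each $\mathrm{St}_G(n)$ is open, so $G_n=\overline{G}/\mathrm{St}_{\overline{G}}(n)$ and $s_n(\overline{G})=s_n(G)=:s_n$; moreover $\overline{G}$ inherits self-similarity and level-transitivity from $G$, and $\overline{K}$ is a closed branching subgroup with $|\overline{G}:\overline{K}|=|G:K|$, since $\psi$ is a topological isomorphism. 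Crucially I would \emph{not} assume $\overline{K}\le\mathrm{St}_{\overline{G}}(1)$, as this would spoil the bookkeeping of $\Omega(G:K)$; instead I exploit that $\overline{K}\times\overset{m}{\dotsb}\times\overline{K}$ is automatically contained in the first-level stabilizer.

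For the branching level I would isolate two facts. The first is that any level stabilizer contained in $\overline{K}$ is itself branching: if $\mathrm{St}_{\overline{G}}(j)\le\overline{K}$, then since $\overline{K}\times\overset{m}{\dotsb}\times\overline{K}\le\psi(\overline{K})\cap\psi(\mathrm{St}(1))=\psi(\overline{K}\cap\mathrm{St}_{\overline{G}}(1))\le\psi(\mathrm{St}_{\overline{G}}(1))$ we get $\mathrm{St}_{\overline{G}}(j)\times\overset{m}{\dotsb}\times\mathrm{St}_{\overline{G}}(j)\le\psi(\mathrm{St}_{\overline{G}}(1))$, and intersecting with $\mathrm{St}_{\overline{G}}(j-1)\times\overset{m}{\dotsb}\times\mathrm{St}_{\overline{G}}(j-1)$ and using \eqref{align: level stabilizers} yields $\mathrm{St}_{\overline{G}}(j)\times\overset{m}{\dotsb}\times\mathrm{St}_{\overline{G}}(j)\le\psi(\mathrm{St}_{\overline{G}}(j))$. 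The second is a propagation principle: if $\overline{K}\,\mathrm{St}_{\overline{G}}(j)=\overline{K}\,\mathrm{St}_{\overline{G}}(j+1)$, the same holds one level deeper. To see this I would take $g\in\mathrm{St}_{\overline{G}}(j+1)$, write $\psi(g)=(g_1,\dots,g_m)\in\psi(\mathrm{St}_{\overline{G}}(1))$ with each $g_i\in\mathrm{St}_{\overline{G}}(j)\le\overline{K}\,\mathrm{St}_{\overline{G}}(j+1)$, factor $g_i=k_it_i$ with $k_i\in\overline{K}$ and $t_i\in\mathrm{St}_{\overline{G}}(j+1)$, and observe that $(k_1,\dots,k_m)\in\overline{K}\times\overset{m}{\dotsb}\times\overline{K}\le\psi(\mathrm{St}_{\overline{G}}(1))$ forces $(t_1,\dots,t_m)=(k_1,\dots,k_m)^{-1}\psi(g)\in\psi(\mathrm{St}_{\overline{G}}(1))$; since also $(t_1,\dots,t_m)\in\mathrm{St}_{\overline{G}}(j+1)\times\overset{m}{\dotsb}\times\mathrm{St}_{\overline{G}}(j+1)$, \eqref{align: level stabilizers} gives $(t_1,\dots,t_m)\in\psi(\mathrm{St}_{\overline{G}}(j+2))$, whence $g\in\overline{K}\,\mathrm{St}_{\overline{G}}(j+2)$.

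With these in hand I would let $\ell_0$ be the least level with $\mathrm{St}_{\overline{G}}(\ell_0)\le\overline{K}$, which exists because $\overline{K}$ is open. Since $\bigcap_n\overline{K}\,\mathrm{St}_{\overline{G}}(n)=\overline{K}$, the propagation principle shows that every step of the chain $\overline{G}=\overline{K}\,\mathrm{St}_{\overline{G}}(0)\ge\cdots\ge\overline{K}\,\mathrm{St}_{\overline{G}}(\ell_0)=\overline{K}$ is strict: an equality at a level $j<\ell_0$ would propagate to all deeper levels and collapse the chain to $\overline{K}$ already at level $j$, contradicting minimality. As the $\ell_0$ step indices multiply to $|G:K|$ and each is at least $2$, additivity of $\Omega$ (i.e. $\Omega(ab)=\Omega(a)+\Omega(b)$) gives $\ell_0\le\Omega(G:K)=\ell$. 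Hence $\mathrm{St}_{\overline{G}}(\ell)\le\mathrm{St}_{\overline{G}}(\ell_0)\le\overline{K}$, so by the first fact $\mathrm{St}_{\overline{G}}(\ell)$ is branching and $\overline{G}$ is regular branch over its $\ell$th level stabilizer; by the remark following the definition of $s_n$ this forces $s_n=0$ for all $n>\ell$.

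Finally I would extract the two formulas. Setting $d_n:=\log|\mathrm{St}_G(n):\mathrm{St}_G(n+1)|$, so that $d_0=\log|G_1|$ and $s_n=m\,d_{n-1}-d_n$, an immediate induction solves the recursion as $d_n=m^n\big(d_0-\sum_{j=1}^{n}s_jm^{-j}\big)$. For $n\ge\ell$ the inner sum equals $S_G(1/m)=\sum_{n=1}^{\ell}s_n/m^n$ because $s_n$ vanishes beyond $\ell$, so \cref{theorem: sn formula} yields $\mathrm{hdim}_{W_H}(\overline{G})=\tfrac{1}{\log|H|}\big(\log|G_1|-\sum_{n=1}^{\Omega(G:K)}s_n/m^n\big)$, the asserted dimension formula. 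For the last assertion I would sum $\log|G_n|=\sum_{i=0}^{n-1}d_i$, substitute the closed form for $d_i$ and interchange the order of summation, obtaining $\log|G_n|=\tfrac{m^n}{m-1}\big(d_0-S_G(1/m)\big)-\tfrac{1}{m-1}\big(d_0-\sum_{n=1}^{\ell}s_n\big)$, that is $\log|G_n|=\alpha m^n+\beta$ with precisely the stated $\alpha$ and $\beta$, valid for all $n\ge\ell$ (the boundary case $n=\ell$ included, since the $j=\ell$ term contributes $0$ there). The main obstacle is the sharp level identification in Part 1: it is the propagation principle that rules out plateaus in the coset chain and thereby upgrades the crude bound $\ell_0\le\log_2|G:K|$ to the sharp $\ell_0\le\Omega(G:K)$.
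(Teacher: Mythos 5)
You have proved the wrong statement. The result under review is \cref{theorem: sn formula}, which the paper does not prove at all but imports from \cite[Theorem B and Proposition 1.1]{JorgeSpectra}: it concerns an \emph{arbitrary} self-similar $G\le W_H$, with no branching subgroup hypothesized, and its two claims are that the lower limit defining $\mathrm{hdim}_{W_H}(\overline{G})$ is a genuine limit (strong Hausdorff dimension) and that this limit equals $\frac{1}{\log|H|}\bigl(\log|G_1|-S_G(1/m)\bigr)$, where $S_G(1/m)=\sum_{n\ge 1}s_n(G)m^{-n}$ is a priori an \emph{infinite} series whose convergence must itself be established. Your proposal instead proves \cref{Theorem: algorithm Hausdorff dimension}, and it does so by explicitly invoking \cref{theorem: sn formula} (``the dimension formula is immediate from \cref{theorem: sn formula}''), so as a proof of the statement at hand it is circular; moreover its entire machinery --- the subgroup $K$, the bound $\ell_0\le\Omega(G:K)$, the propagation principle for the cosets $\overline{K}\,\mathrm{St}_{\overline{G}}(n)$ --- has no counterpart in the quoted theorem, which mentions no $K$. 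Nowhere do you address the two points that actually constitute the statement: the existence of the limit, and the validity of the formula when $s_n(G)$ does not eventually vanish.

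What is salvageable is the computational skeleton: solving $s_n=m\,d_{n-1}-d_n$ (with $d_n:=\log|\mathrm{St}_G(n):\mathrm{St}_G(n+1)|$) to get $d_n=m^n\bigl(d_0-\sum_{j=1}^{n}s_jm^{-j}\bigr)$ and summing to $\log|G_n|$ is exactly the right engine, but a proof of \cref{theorem: sn formula} must run it without assuming $s_n=0$ beyond a finite level. The missing ideas are: first, $s_n(G)\ge 0$ for self-similar $G$, because $\psi$ together with \cref{align: level stabilizers} embeds $\mathrm{St}_G(n)/\mathrm{St}_G(n+1)$ into the direct product of $m$ copies of $\mathrm{St}_G(n-1)/\mathrm{St}_G(n)$; second, since $d_n\ge 0$ the partial sums $\sum_{j=1}^{n}s_jm^{-j}$ are nondecreasing and bounded above by $d_0$, so $S_G(1/m)$ converges; third, comparing $\log|G_n|=\sum_{i=0}^{n-1}d_i$ with $\log|W_H:\mathrm{St}_{W_H}(n)|=\log|H|\cdot\frac{m^n-1}{m-1}$, the monotone convergence of the partial sums forces the ratio to converge (not merely to have a lower limit) to $\frac{1}{\log|H|}\bigl(d_0-S_G(1/m)\bigr)$. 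Incidentally, read as a blind proof of \cref{Theorem: algorithm Hausdorff dimension} your text is essentially sound and close to the paper's Section 3 --- your element-wise propagation lemma is a clean substitute for \cref{proposition: if trivial always trivial}, avoiding the paper's passage to the normal core and its index-inequality/Dedekind computation --- but that is a different theorem from the one you were asked to prove.
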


\section{Proof of the main result}
\label{section: further applications}

In this section, we give a constructive proof of \cref{Theorem: algorithm Hausdorff dimension}, which yields an explicit algorithm to compute the Hausdorff dimension of the closure of any regular branch group. The proof is based on the following key observation:

\begin{lemma}
\label{proposition: if trivial always trivial}
Let $G\le \mathrm{Aut}~T_m$ be a weakly self-similar group and a normal branching subgroup $K\le G$. If the quotient $K\mathrm{St}_G(k-1)/K\mathrm{St}_G(k)$ is trivial for some $k\ge 1$ then the quotients $K\mathrm{St}_G(n-1)/K\mathrm{St}_G(n)$ are trivial for all $n\ge k$.
\end{lemma}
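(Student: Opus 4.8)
The plan is to show that the triviality of one quotient $K\mathrm{St}_G(k-1)/K\mathrm{St}_G(k)$ propagates upward by induction on $n$, using the branching structure of $K$ together with the recursive description of level stabilizers in \eqref{align: level stabilizers}. The key will be to translate the hypothesis $K\mathrm{St}_G(k-1)=K\mathrm{St}_G(k)$ into a statement about sections and then use the product embedding $K\times\overset{m}{\dotsb}\times K\le \psi(K)$ to descend one level inside the tree.

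\begin{proof}[Proof sketch]
It suffices to prove the inductive step: assuming $K\mathrm{St}_G(k-1)=K\mathrm{St}_G(k)$, I would show $K\mathrm{St}_G(k)=K\mathrm{St}_G(k+1)$, and then iterate. Equivalently, writing the hypothesis as $\mathrm{St}_G(k-1)\le K\mathrm{St}_G(k)$, the goal is $\mathrm{St}_G(k)\le K\mathrm{St}_G(k+1)$. The natural strategy is to take an arbitrary $g\in \mathrm{St}_G(k)\le \mathrm{St}_G(1)$ and examine its image under $\psi$. Since $g\in\mathrm{St}_G(k)$, by \eqref{align: level stabilizers} each section $g|_i$ lies in $\mathrm{St}_G(k-1)$, and by weak self-similarity each $g|_i\in G$, so in fact $g|_i\in\mathrm{St}_G(k-1)$.

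Now I would apply the hypothesis to each section: since $\mathrm{St}_G(k-1)\le K\mathrm{St}_G(k)$, we may write each $g|_i=\kappa_i t_i$ with $\kappa_i\in K$ and $t_i\in\mathrm{St}_G(k)$. Then $\psi(g)=(\kappa_1 t_1,\dotsc,\kappa_m t_m)=(\kappa_1,\dotsc,\kappa_m)(t_1,\dotsc,t_m)$. The branching hypothesis $\psi(K)\ge K\times\overset{m}{\dotsb}\times K$ shows the first factor $(\kappa_1,\dotsc,\kappa_m)$ lies in $\psi(K)$, hence pulls back to an element of $K\cap\mathrm{St}_G(1)$. For the second factor, since each $t_i\in\mathrm{St}_G(k)$, the tuple $(t_1,\dotsc,t_m)$ together with \eqref{align: level stabilizers} corresponds to an element whose image lies in $\mathrm{St}_G(k+1)$. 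Pulling everything back through $\psi$ gives $g\in K\mathrm{St}_G(k+1)$, as desired, completing the inductive step.

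The main obstacle I anticipate is the careful bookkeeping at the level of $\psi$: I must verify that the decomposition $\psi(g)=(\kappa_1,\dotsc,\kappa_m)(t_1,\dotsc,t_m)$ can genuinely be pulled back so that the $K$-part lands in $K$ itself (not merely in the product $K\times\dotsb\times K$ viewed abstractly) and the stabilizer-part lands in $\mathrm{St}_G(k+1)$. This requires that $K$ be a subgroup of $G$ with $\psi(K)\ge K\times\overset{m}{\dotsb}\times K$ and that the relevant elements actually lie in $G$, which is exactly where normality of $K$ and weak self-similarity are used to guarantee that the sections and their $K$-decompositions remain inside $G$. Once these containments are justified, the induction runs immediately and the conclusion for all $n\ge k$ follows.
\end{proof}
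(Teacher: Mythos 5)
Your proof is correct, and it takes a genuinely different route from the paper's. The paper argues with subgroup indices rather than elements: it sets $L:=\psi^{-1}(K\times\overset{m}{\dotsb}\times K)$ (so $L\le K$ by the branching property, whence $KL=K$) and bounds $|K\mathrm{St}_G(n-1):K\mathrm{St}_G(n)|$ above by $|K\mathrm{St}_G(n-2):K\mathrm{St}_G(n-1)|^m$ through a chain of index (in)equalities combining \cref{align: level stabilizers} with Dedekind's modular law; normality of $K$ is invoked there so that $L$ and the products $L\mathrm{St}_G(n)$ are normal, which is what makes those index manipulations legitimate. Your element-wise argument uses exactly the same structural ingredients---\cref{align: level stabilizers}, weak self-similarity, and $K\times\overset{m}{\dotsb}\times K\le\psi(K)$---but replaces the index calculus by an explicit factorization: for $g\in\mathrm{St}_G(n)$ the sections satisfy $g|_i=\kappa_i t_i\in K\mathrm{St}_G(n)$ by induction, the tuple $(\kappa_1,\dotsc,\kappa_m)$ pulls back through $\psi$ to a single $\kappa\in K\cap\mathrm{St}_G(1)$, and $\kappa^{-1}g\in G$ has all first-level sections $t_i\in\mathrm{St}(n)$, hence lies in $\mathrm{St}_G(n+1)$, giving $g\in K\mathrm{St}_G(n+1)$. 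This is more elementary, and it even buys a slightly stronger result: your argument never actually uses normality of $K$ (the factorizations stay inside $G$ simply because $K\le G$ and $\mathrm{St}_G(n)\le G$, and the sections stay inside $G$ by weak self-similarity), so it establishes the set equalities $K\mathrm{St}_G(n-1)=K\mathrm{St}_G(n)$ for an arbitrary branching subgroup, normality being needed only to read these as statements about genuine quotient groups. In this respect your closing paragraph slightly misattributes the difficulty: normality of $K$ is not what keeps the decomposition inside $G$; weak self-similarity alone does that. What the paper's index-theoretic formulation buys in exchange is the quantitative inequality $|K\mathrm{St}_G(n-1):K\mathrm{St}_G(n)|\le|K\mathrm{St}_G(n-2):K\mathrm{St}_G(n-1)|^m$, which holds without any triviality hypothesis and of which the lemma is the degenerate case.
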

\begin{proof}
Note that if $M,N$ are normal subgroups of a group $G$ and $N\le M$, then for any subgroup $H\le G$ we have both
\begin{align}
    \label{align: index inequalities}
    |HM:HN|\le |M:N|\quad\text{and}\quad |H\cap M:H\cap N|\le |M:N|.
\end{align}

Let $L:=\psi^{-1}(K\times\overset{m}{\dotsb}\times K)$ and let us assume $K\mathrm{St}_G(k-1)/K\mathrm{St}_G(k)=1$ for some $k\ge 1$. We prove the result by induction on $n\ge k$. The base case holds by assumption. By the weak self-similarity of $G$ we have an embedding of  $\mathrm{St}_G(n-1)/\mathrm{St}_G(n)$ in 
$$\big(\mathrm{St}_G(n-2)/\mathrm{St}_G(n-1)\big)\times\overset{m}{\dotsb}\times \big(\mathrm{St}_G(n-2)/\mathrm{St}_G(n-1)\big)$$
induced by the isomorphism $\psi$. Note that $L$ is normal in $G$ as $K$ is normal in $G$. Then by induction we obtain 
\begin{align*}
    |K\mathrm{St}_G(n-1): K\mathrm{St}_G(n)|&=|KL\mathrm{St}_G(n-1): KL\mathrm{St}_G(n)| \\
    &\le |L\mathrm{St}_G(n-1): L\mathrm{St}_G(n)|\\
    &=\left|\frac{\psi(L)\big(\psi(\mathrm{St}_G(1))\cap \mathrm{St}_G(n-2)\times\overset{m}{\dotsb}\times \mathrm{St}_G(n-2) \big)}{ \psi(L)\big(\psi(\mathrm{St}_G(1))\cap \mathrm{St}_G(n-1)\times\overset{m}{\dotsb}\times \mathrm{St}_G(n-1) \big)}\right|\\
    &=\left|\frac{\psi(\mathrm{St}_G(1))\cap\psi(L)\big( \mathrm{St}_G(n-2)\times\overset{m}{\dotsb}\times \mathrm{St}_G(n-2) \big)}{ \psi(\mathrm{St}_G(1))\cap\psi(L)\big( \mathrm{St}_G(n-1)\times\overset{m}{\dotsb}\times \mathrm{St}_G(n-1) \big)}\right|\\
    &\le |K\mathrm{St}_G(n-2): K\mathrm{St}_G(n-1)|^m=1,
\end{align*}
where the inequalities follow from \textcolor{teal}{(}\ref{align: index inequalities}\textcolor{teal}{)}, the first equality follows from $K$ being branching, the second equality follows from \cref{align: level stabilizers} and the last equality follows from Dedekind's modular law as $\psi(L)\le \psi(\mathrm{St}_G(1))$.
\end{proof}

\begin{proof}[Proof of \cref{Theorem: algorithm Hausdorff dimension}]
First note that $K$ may be assumed to be normal by replacing~$K$ with $\mathrm{Core}_G(K)$ as $K$ is of finite index; see \cite{JorgeJoneOihana} for a short proof. We claim that $K\mathrm{St}_G(\Omega(G:K))=K\mathrm{St}_G(\Omega(G:K)+1)$. Thus $K\mathrm{St}_G(n)=K\mathrm{St}_G(n+1)$ for all $n\ge \Omega(G:K)$ by \cref{proposition: if trivial always trivial} which implies that $\overline{K}\ge \mathrm{St}_{\overline{G}}(\Omega(G:K))$ as $\overline{G}$ is profinite with respect to the congruence topology. Therefore $\overline{G}$ is regular branch over its $\Omega(G:K)$th level stabilizer so $s_n(G)=0$ for all $n> \Omega(G:K)$ and the result on the Hausdorff dimension follows from \cref{theorem: sn formula}. Let us prove our claim. Since $\overline{K}$ is open, it contains a level-stabilizer. Hence, by \cref{proposition: if trivial always trivial}, we may assume that there exists $k\ge 1$ maximal such that $K\mathrm{St}_G(n-1)>K\mathrm{St}_G(n)$ for all $n\le k$. We just need to prove that $k\le \Omega(G:K)$. Since
$$G> K\mathrm{St}_G(1)> \dotsb> K\mathrm{St}_G(k)\ge K$$
the orders of the consecutive quotients of the above filtration are non-trivial divisors of the index $|G:K\mathrm{St}_G(k)|$. Then $k$ is at most the number of prime factors with repetition of $|G:K\mathrm{St}_G(k)|$ and we conclude that $k \le \Omega(G:K\mathrm{St}_G(k))\le \Omega(G:K)$, as $|G:K\mathrm{St}_G(k)|$ divides $|G:K|$.

To conclude, we just need to compute $\log|G_n|$ for $n\ge 1$. Since $G$ is regular branch over its $\Omega(G:K)$th stabilizer, the same computations as in the proof of \cite[Proposition 2.7]{BartholdiHausdorff} yield precisely the values of $\alpha$ and $\beta$ in the statement.
\end{proof}

\section{An application: GGS-groups acting on the 4-adic tree}
\label{section: an example}

The \textit{Grigorchuk-Gupta-Sidki groups}, \textit{GGS-groups} for short, acting on the 4-adic tree $T_4$ are the groups generated by $a$ and $b$, where $a$ is the rooted automorphism $\psi(a)=(1,1,1,1)(1\,2\,3\,4)$ and $b$ is the directed automorphism $\psi(b)=(a^{e_1},a^{e_2},a^{e_3},b)$, for some $\mathbf{e}:=(e_1,e_2,e_3)\in (\mathbb{Z}/4\mathbb{Z})^3$. The vector $\mathbf{e}$ is called the \textit{defining vector} of the group. The vector \textbf{e} is said to be \textit{invertible-symmetric}, \textit{IS} for short, whenever $e_i$ is a unit modulo 4 if and only if $e_{4-i}$ is a unit modulo~4 for every $1\le i\le 3$; see \cite{GGSElena}. All these GGS-groups are subgroups of $W_4$, where $W_4:=W_{\langle (1\, 2\, 3\, 4)\rangle}\le \mathrm{Aut}~T_4$. 

\subsection{Hausdorff dimension}
We apply the algorithm in \cref{Theorem: algorithm Hausdorff dimension} to the closure of all of the GGS-groups acting on the $4$-adic tree with a non-IS defining vector.

\begin{theorem}
\label{Theorem: GGS Hausdorff example}
The closure of the GGS-group $G\le \mathrm{Aut}~T_4$ given by a non-IS defining vector $\mathbf{e}=(e_1,e_2,e_3)$ satisfies the following:
\begin{align*}
    \mathrm{hdim}_{W_4}(\overline{G})=\begin{cases}
3/4,&\quad \mathrm{if~} e_1+e_2+e_3\equiv 1\mathrm{~mod~} 2\\
9/16,&\quad \mathrm{otherwise}.
\end{cases}
\end{align*}
What is more, the logarithmic orders of the congruence quotients are
\begin{align*}
    \log|G_n|=\begin{cases}
4^{n-1}+1,&\quad \mathrm{if~} e_1+e_2+e_3\equiv 1\mathrm{~mod~} 2\\
3\cdot 4^{n-2}+1,&\quad \mathrm{otherwise}
\end{cases}
\end{align*}
for $n\ge 1$ and $n\ge 2$ respectively in the formula above.
\end{theorem}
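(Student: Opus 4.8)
The plan is to apply \cref{Theorem: algorithm Hausdorff dimension} to each non-IS GGS-group $G\le W_4$, so the first task is to supply a branch structure. I would invoke \cite{GGSElena}, where such groups are shown to be regular branch over an explicit finite-index subgroup $K\le G$, and read off $|G:K|$ and hence $\Omega(G:K)$ from that classification. Once a branching $K$ is fixed, \cref{Theorem: algorithm Hausdorff dimension} guarantees that $\overline{G}$ is regular branch over its $\Omega(G:K)$th level stabilizer, so that $s_n(G)=0$ for all $n>\Omega(G:K)$; the Hausdorff dimension formula then involves only finitely many terms, and the whole problem reduces to computing the congruence quotient orders $\log|G_n|$ for $n$ up to $\Omega(G:K)+1$.

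Next I would compute those orders. Since only $a$ acts nontrivially on the first level, $G_1=\langle a\rangle\cong\mathbb{Z}/4\mathbb{Z}$ and $\log|G_1|=1$ in every case. For the second level I would use that the level-$2$ action identifies $\mathrm{St}_G(1)/\mathrm{St}_G(2)$ with the subgroup $M\le(\mathbb{Z}/4\mathbb{Z})^4$ generated by the four cyclic shifts of $(e_1,e_2,e_3,0)$, the vector recording the first-level actions of the sections of $b,b^a,b^{a^2},b^{a^3}$. Because $\mathbb{Z}/4\mathbb{Z}$ is local with residue field $\mathbb{F}_2$, Nakayama's lemma gives $M=(\mathbb{Z}/4\mathbb{Z})^4$ exactly when the associated circulant is invertible modulo $2$, and a short computation shows this reduction is nonsingular precisely when $e_1+e_2+e_3$ is odd. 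This is where the parity dichotomy originates: for $e_1+e_2+e_3$ odd the shifts generate everything and $\log|G_2|=5$, whereas for $e_1+e_2+e_3$ even the circulant degenerates and a direct kernel computation (using the non-IS hypothesis) pins the value down to $\log|G_2|=4$.

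With $\log|G_1|$ and $\log|G_2|$ in hand I would perform the analogous but one-level-deeper computation inside $K$, exploiting $\psi(K)\ge K\times\overset{4}{\dotsb}\times K$, to obtain $\log|G_3|$; equivalently, these orders may simply be extracted from \cite{GGSElena} or produced by the GAP implementation. This yields the two nonzero values $s_1(G),s_2(G)$, which come out as $(0,4)$ in the odd case and $(1,3)$ in the even case, and in particular confirms $s_n(G)=0$ for $n\ge 3$, i.e. effective branching over the second level. Substituting into \cref{Theorem: algorithm Hausdorff dimension} with $\log|H|=\log 4=1$ then gives $\sum_n s_n(G)/4^n$ equal to $1/4$ and $7/16$, hence $\mathrm{hdim}_{W_4}(\overline G)=3/4$ and $9/16$ respectively; reading off $\alpha$ and $\beta=1$ produces the closed forms $\log|G_n|=4^{n-1}+1$ and $\log|G_n|=3\cdot 4^{n-2}+1$.

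The main obstacle is the even-parity instance of the second and third steps. There the circulant, and more importantly the analogous object governing $\log|G_3|$, become singular modulo $2$, so their orders are no longer forced by the non-vanishing of a single determinant and could a priori collapse further; ruling this out is exactly the role of the invertible-symmetric hypothesis, and it is the delicate bookkeeping that the GAP implementation of \cref{Theorem: algorithm Hausdorff dimension} discharges uniformly over the finitely many relevant defining vectors.
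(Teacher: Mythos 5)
Your overall strategy is the same as the paper's: take the branch structure over $K=G'$ from \cite{GGSElena} (so $|G:G'|=4^2$ and $\Omega(G:K)=4$), apply \cref{Theorem: algorithm Hausdorff dimension}, and reduce the problem to finitely many congruence quotient orders; your circulant-modulo-$2$ explanation of where the parity dichotomy comes from at level $2$ is a nice supplement that the paper does not give. However, there is a genuine gap in the execution. Your first paragraph correctly observes that one needs $\log|G_n|$ for all $n\le \Omega(G:K)+1=5$, but your computation stops at $\log|G_3|$, which determines only $s_1(G)$ and $s_2(G)$. You then assert that this ``confirms $s_n(G)=0$ for $n\ge 3$''. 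It does not: with the available branch structure over $G'$, \cref{Theorem: algorithm Hausdorff dimension} guarantees $s_n(G)=0$ only for $n\ge 5$, and nothing you have computed controls $s_3(G)$ or $s_4(G)$. Vanishing of the earlier terms never propagates forward by itself; indeed, for the second Grigorchuk group (the GGS-group with defining vector $(1,0,1)$, treated in \cref{proposition: Second Grigorchuk example}) one has $s_3(G)=0.5\neq 0$, so nonvanishing of $s_3$ for a GGS-group on $T_4$ is a real possibility that must be excluded by computation. The paper closes exactly this point by computing $\log|G_n|$ for $n=1,\dotsc,5$ in GAP and checking $s_3(G)=s_4(G)=0$ from \cref{table: GAP data}; your parenthetical fallback to ``the GAP implementation'' would suffice only if it is run through level $5$, not level $3$ as your plan describes.

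Two further, smaller issues. First, your phrase ``$s_n(G)=0$ for $n\ge 3$, i.e.\ effective branching over the second level'' reverses the implication the paper establishes: being regular branch over the $k$th level stabilizer implies $s_n=0$ for $n>k$, not conversely, and only the correct direction is needed anyway. Second, the closed forms $\log|G_n|=4^{n-1}+1$ and $\log|G_n|=3\cdot 4^{n-2}+1$ are guaranteed by \cref{Theorem: algorithm Hausdorff dimension} only for $n\ge \Omega(G:K)=4$; extending them down to the ranges claimed in \cref{Theorem: GGS Hausdorff example} requires comparing with the directly computed small values, which again uses the full list $\log|G_n|$, $n\le 5$, rather than only the first three terms. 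Finally, note that the paper first normalizes the defining vector (via \cite[Lemma 2.4]{GGSElena}, using that conjugation and permutation of the vector do not change $\log|G_n|$) so that $e_1=1$, leaving exactly $8$ vectors to feed to GAP; your argument works with general $(e_1,e_2,e_3)$, which is fine in principle but makes the ``finitely many relevant defining vectors'' you defer to GAP a larger set than necessary.
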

\begin{proof}
    Note that by \cite[Lemma 2.4]{GGSElena} we may assume that either the first or the second component of the defining vector is equal to 1, as for any $f\in \mathrm{Aut}~T_4$ we have that $\log|G^f_n|=\log|G_n|$ for all $n\ge 1$ and a permutation of the defining vector does not affect the statement of \cref{Theorem: GGS Hausdorff example}. Since we are dealing with non-IS defining vectors it cannot be that only the second component is invertible modulo~4, thus we may always assume that the first component is 1. Note that $G$ is regular branch over $G'$ by \cite[Theorem 2.7]{GGSElena} and we have $|G:G'|=4^2$ by \cite[Theorem 2.1]{GGSElena}. Therefore $\Omega(4^2)=4$, so $s_n(G)=0$ for all $n\ge 5$ by \cref{Theorem: algorithm Hausdorff dimension}. Hence we only need to compute $\log|G_n|$ for $n=1,2,3,4,5$ and apply the formulae in \cref{Theorem: algorithm Hausdorff dimension}. This is done in GAP for all the 8 different non-IS defining vectors whose first entry equals to 1; see \cref{table: GAP data} below. 
\end{proof}

\subsection{Discarding branch structures}

The proof of \cref{Theorem: algorithm Hausdorff dimension} also suggests a method to discard possible branch structures in a self-similar group acting faithfully on the $m$-adic tree. If we consider a branching subgroup $K$ of finite index in $G$ its closure is open in $\overline{G}$ so $\overline{K}\ge \mathrm{St}_{\overline{G}}(k)$ for some $k\ge 1$. Thus we have $s_n(G)=0$ for all $n>k$. If we have $s_n(G)\ne 0$ for some $n>k$ then we can conclude $G$ is not regular branch over $K$.

As an example let us consider the second Grigorchuk group $G$, i.e. the GGS-group $G\le \mathrm{Aut}~T_4$ given by the defining vector $\mathbf{e}=(1,0,1)$. It is known that $G$ is regular branch over $\gamma_3(G)$ \cite[Proof of Lemma 2.1]{Pervova}. We discard a finer branch structure over $G'$.

\begin{proposition}
\label{proposition: Second Grigorchuk example}
The second Grigorchuk group $G$  does not branch over~$G'$.
\end{proposition}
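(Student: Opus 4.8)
The plan is to use the discarding method described just before the statement: to show that $G$ does not branch over $G'$, it suffices to exhibit an index $n$ with $s_n(G)\neq 0$ that is larger than the level $k$ below which the closure of $G'$ would have to contain a level stabilizer were $G$ regular branch over $G'$. Concretely, if $G$ were regular branch over $K=G'$, then \cref{Theorem: algorithm Hausdorff dimension} forces $s_n(G)=0$ for all $n>\Omega(G:G')$, so I would first compute $\Omega(G:G')$ and then find a single level $n>\Omega(G:G')$ where $s_n(G)$ is provably nonzero, yielding the desired contradiction.

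First I would pin down the relevant indices. The second Grigorchuk group is the GGS-group with defining vector $\mathbf{e}=(1,0,1)$, and since $e_1+e_2+e_3=2\equiv 0\bmod 2$, the results already established (see \cref{Theorem: GGS Hausdorff example}) place it in the second case, giving $\log|G_n|=3\cdot 4^{n-2}+1$ for $n\ge 2$ and $\mathrm{hdim}_{W_4}(\overline G)=9/16$. From $|G:G'|=4^2$ I get $\Omega(G:G')=\Omega(16)=4$. Thus the hypothetical branch structure over $G'$ would force $s_n(G)=0$ for all $n\ge 5$. My strategy is therefore to show that $s_n(G)\neq 0$ for some specific $n\ge 5$.

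Next I would translate the values $\log|G_n|$ into the sequence $\{s_n(G)\}$ directly from the definition $s_n(G)=m\log|\mathrm{St}_G(n-1):\mathrm{St}_G(n)|-\log|\mathrm{St}_G(n):\mathrm{St}_G(n+1)|$. Using $\log|\mathrm{St}_G(n-1):\mathrm{St}_G(n)|=\log|G_n|-\log|G_{n-1}|$, I can express $s_n(G)$ as a second-order expression in the $\log|G_j|$. Since $G$ is in fact regular branch over $\gamma_3(G)$, the closure $\overline G$ is regular branch over a level stabilizer, so from \cref{Theorem: algorithm Hausdorff dimension} the quantities $\log|G_n|$ are eventually affine in $m^n=4^n$; and indeed the formula $\log|G_n|=3\cdot 4^{n-2}+1$ is exactly of the form $\alpha\cdot 4^n+\beta$ with $\alpha=3/16$ and $\beta=1$, valid for all $n$ in the relevant range. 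Plugging an affine-plus-constant sequence $\log|G_n|=\alpha 4^n+\beta$ into the formula for $s_n(G)$, the $\alpha 4^n$ terms telescope away (this is precisely the content of ``$s_n=0$ in the regular branch range''), and what survives is a contribution coming from the constant term $\beta$, which I expect to give $s_n(G)=(m-1)\beta=3\cdot 1=3\neq 0$ for all $n$ in the range where the affine formula holds. In particular $s_5(G)=3\neq 0$.

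The main obstacle, and the point requiring genuine care rather than routine computation, is the validity range of the closed-form expression for $\log|G_n|$: the formula $\log|G_n|=3\cdot 4^{n-2}+1$ must be known to hold for $n=4,5,6$ so that $s_5(G)$ can actually be evaluated. Since $G$ is regular branch over $\gamma_3(G)$ and $|G:\gamma_3(G)|$ determines via \cref{Theorem: algorithm Hausdorff dimension} the (smaller) level $\Omega(G:\gamma_3(G))$ beyond which the affine formula is guaranteed, I would verify that this threshold is at most $4$, so that the affine form is valid for all $n\ge 4$ and in particular $s_n(G)=3$ for every $n\ge 5$. This nonvanishing at level $5>\Omega(G:G')=4$ directly contradicts the requirement $s_n(G)=0$ for $n>\Omega(G:G')$ that a branch structure over $G'$ would impose, completing the proof that $G$ does not branch over $G'$.
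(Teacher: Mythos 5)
Your strategy has three flaws, each of which is fatal on its own. First, you invoke \cref{Theorem: GGS Hausdorff example} for the second Grigorchuk group, but its defining vector $\mathbf{e}=(1,0,1)$ is invertible-symmetric ($e_1=1$ and $e_3=1$ are both units, $e_2=0$ is not, so the symmetry condition holds for every $i$), and that theorem covers only non-IS vectors. The group's actual congruence quotients are $\log|G_1|=1$, $\log|G_2|=3$, $\log|G_3|=8.5$, $\log|G_4|=30$, not $3\cdot 4^{n-2}+1$, and its Hausdorff dimension is $43/128$, not $9/16$. Second, your algebra is wrong: substituting $\log|G_j|=\alpha m^j+\beta$ into
\begin{equation*}
s_n(G)=m\bigl(\log|G_n|-\log|G_{n-1}|\bigr)-\bigl(\log|G_{n+1}|-\log|G_n|\bigr)
=m\alpha\bigl(m^n-m^{n-1}\bigr)-\alpha\bigl(m^{n+1}-m^n\bigr)=0
\end{equation*}
shows that the constant $\beta$ cancels in both differences; an affine-in-$m^n$ sequence gives $s_n=0$ identically, never $(m-1)\beta$. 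Indeed, affinity of $\log|G_n|$ in $m^n$ is \emph{equivalent} to the vanishing of $s_n$, which is precisely why \cref{Theorem: algorithm Hausdorff dimension} can assert both of its conclusions at once; your claimed value $s_n(G)=3$ for $n\ge 5$ would contradict the very theorem you are invoking.

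Third, and most fundamentally, no repair of these computations can save the plan: the contradiction you seek does not exist. The second Grigorchuk group genuinely is regular branch over $\gamma_3(G)$, and $\gamma_3(G)\ge \mathrm{St}_G(3)$ by Pervova \cite{Pervova}, so $\overline{G}$ is regular branch over $\mathrm{St}_{\overline{G}}(3)$ and therefore $s_n(G)=0$ for \emph{all} $n\ge 4$. Hence, even granting $\Omega(G:G')=4$, there is no $n\ge 5$ with $s_n(G)\ne 0$, and the test based on the threshold from \cref{Theorem: algorithm Hausdorff dimension} is too coarse to detect anything. The paper's proof avoids this by using sharper, $K$-specific information: Pervova proved $G'\ge \mathrm{St}_G(2)$, so $\overline{G'}\ge \mathrm{St}_{\overline{G}}(2)$, and a branch structure over $G'$ would force $s_n(G)=0$ for all $n\ge 3$; the GAP values above give $s_3(G)=4(8.5-3)-(30-8.5)=0.5\ne 0$, the desired contradiction. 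The lesson is that in the discarding method one must take the level $k$ with $\overline{K}\ge \mathrm{St}_{\overline{G}}(k)$ as small as possible: for this group the non-vanishing of $s_n$ occurs only in the window $n\le 3$, strictly below the window $n\ge 5$ that your argument inspects.
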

\begin{proof}
    It was proved by Pervova in \cite[Lemma~3.2]{Pervova} that $G'\ge \mathrm{St}_G(2)$. Therefore, were it to branch over $G'$, we would have $s_n(G)=0$ for all $n\ge 3$ by the discussion above. However, a computation in GAP shows that
\begin{align*}
    \log|G_1|=1,~\log|G_2|=3,~\log|G_3|=8.5 \text{ and }\log|G_4|=30
\end{align*}
yielding $s_1(G)=2, s_2(G)=2.5$ and $s_3(G)=0.5\ne 0$ , which results in a contradiction.
\end{proof}
 
However as $\gamma_3(G)\ge \mathrm{St}_G(3)$ by \cite[Lemma 3.2]{Pervova} and $G$ branches over $\gamma_3(G)$, the above computations are enough to compute the Hausdorff dimension of the closure of the second Grigorchuk group and we obtain
$$\mathrm{hdim}_{W_4}(\overline{G})=1-S_G(1/4)=1-\frac{2}{4}-\frac{2.5}{4^2}-\frac{0.5}{4^3}=\frac{43}{128},$$
which agrees with the result of Noce and Thillaisundaram who first computed the Hausdorff dimension of the closure of the second Grigorchuk group \cite[Theorem~A(ii)]{MarialauraAnitha}.

\begin{table}[H]
\begin{center}
\begin{tabularx}{\textwidth}[t]{XXXX}
\arrayrulecolor{teal}\hline
\rowcolor{lightBlue} \textcolor{teal}{Defining vector} & \textcolor{teal}{$(\log|G_n|)_{n=1}^5$} & \textcolor{teal}{$(s_n(G))_{n=1}^4$} & \textcolor{teal}{$\mathrm{hdim}_{W_4}(\overline{G})$} \\
\hline

$(1,0,0)$ & $(1,5,17,65,257)$ & $(0,4,0,0)$ & $3/4$\\
$(1,0,2)$ & $(1,5,17,65,257)$ & $(0,4,0,0)$ & 3/4\\
$(1,2,0)$ & $(1,5,17,65,257)$ & $(0,4,0,0)$ & 3/4\\
$(1,2,2)$ & $(1,5,17,65,257)$ & $(0,4,0,0)$ & 3/4\\

\arrayrulecolor{mygray}\hline

$(1,1,0)$ & $(1,4,13,49,193)$ & $(1,3,0,0)$ & 9/16\\
$(1,1,2)$ & $(1,4,13,49,193)$ & $(1,3,0,0)$ & 9/16\\
$(1,-1,0)$ & $(1,4,13,49,193)$ & $(1,3,0,0)$ & 9/16\\
$(1,-1,2)$ & $(1,4,13,49,193)$ & $(1,3,0,0)$ & 9/16\\

\hline
\end{tabularx}
\end{center}
\caption{GAP data for the GGS-groups with a non-IS defining vector acting on the 4-adic tree.}
\label{table: GAP data}
\end{table}



\bibliographystyle{unsrt}

\end{document}